\documentclass[8pt, a4paper, twocolumn]{article}
\usepackage{titlesec}
\titleformat*{\section}{\normalsize\bfseries\sffamily}

\usepackage[utf8]{inputenc}
\usepackage{geometry}
\usepackage{subfiles}
\usepackage{enumitem}
\usepackage{comment}

\usepackage{amsmath}
\usepackage{amsfonts}
\usepackage{amssymb}
\usepackage{amsthm}
\usepackage{mathtools}

\usepackage{caption}
\usepackage{subcaption}

\newcommand{\citep}{\cite}
\newcommand{\citet}{\cite}
\newcommand{\citeyear}{\cite}

\newtheorem{theorem}{Theorem}
\newtheorem*{theorem*}{Theorem}
\newtheorem{proposition}{Proposition}

\newtheorem*{lemma*}{Lemma}
\newtheorem*{remark*}{Remark}

\theoremstyle{definition}

\newtheorem{definition}{Definition}

\newtheorem*{acknowledgement}{Acknowledgement}

\usepackage{natbib}
\begin{document}

\title{\vspace{-1cm} The Shift Operator Calculus for Stationary Time Series Analysis}
\author{
  Anand Ganesh \thanks{National Institute of Advanced Studies, Manipal Academy of Higher Education, email: anandg@nias.res.in} \and
  Babhrubahan Bose \thanks{Department of Mathematics, Indian Institute of Science, email: babhrubahanbose@gmail.com} \and
  Anand Rajagopalan \thanks{Independent ML Researcher, email: anandbr@gmail.com}
}
\date{}

\maketitle
\begin{abstract}
The article establishes a rigorous shift operator calculus for stationary time series modeling, addressing a certain gap in the literature. It provides proofs of existence and isometry for the transfer function operators $f(B)$ and $f(T)$ where $B$ is the bilateral shift operator and $T$ is the unilateral shift operator for different families of functions $f$. The article establishes convergence of the power series of $f(B)$ and $f(T)$ under the operator norm for the Wiener algebra $\mathbb{W}_+$, and convergence under strong operator topology for $f$ in $H^{\infty}$, based on the use of Abel sums. Based on this calculus, it unifies the notion of stationary process invertibility with the operator invertibility of the transfer function $f(T)$. 
\end{abstract}


\section{Introduction}

Let us consider a purely linearly non-deterministic stationary process $X_t$ and its moving average representation $X_t = f(B) \epsilon_t$ where $B$ is the bilateral shift operator, a unitary operator also known as the lag operator or backshift operator, and $\epsilon_t$ represents white noise. We note a gap in time series literature in the use of operator valued transfer functions like $f(B)$ since it does not specify the nature of functions $f$ for which $f(B)$ is well-defined, or the nature of convergence for the power series of $f(B)$. The current article addresses this gap by first establishing norm convergence of the power series of $f(B)$ for $f \in \mathbb{W}_+$ (Proposition \ref{existence_theorem_w}), matching the $\ell^1$ criterion commonly used in time series analysis. We then invoke the operator theoretic analysis in \cite{NagyFoias2010} to relax this $\ell^1$ criterion and include a larger class of $H^\infty$ functions using Abel sums, with convergence under the strong operator topology (Proposition \ref{unitary_existence_theorems}). The most general representation of linearly non-deterministic stationary processes $X_t$ based on the operator notation $f(B)$ requires the use of unbounded operators with $f \in H^2$, but further work is required to understand these.

We next identify a peculiar mismatch between operator theoretic literature and applied time series literature in the use of the bilateral shift operator $B$, as seen in the above moving average representation $X_t = f(B) \epsilon_t$. It is a well known result of von Neumann that an isometric operator $V$ can be decomposed into a unitary operator $B$ and a pure isometry $T$, and that this decomposition corresponds respectively to the decomposition of a stationary process $Z_t$ into a linearly deterministic part $Y_t$ and a purely linearly non-deterministic part $X_t$ respectively. The mildly paradoxical situation is that time series literature traditionally uses a {\em unitary} operator, namely the bilateral shift operator $B$, for the {\em non}-unitary part $X_t$ of this decomposition, which represents the non-deterministic part of the process. The current article clarifies and resolves this paradoxical situation by introducing the unilateral shift operator $T$ with proofs of existence and isometry for $f(T)$ (Proposition \ref{hinfinity_isometry_theorem}). We believe the upcoming discussions on invertibility and mean ergodicity illustrate the value of a rigorous functional calculus for $f(B)$ and $f(T)$.

The use of $f(T)$, being in line with the von Neumann decomposition, naturally unifies the invertibility condition for a stationary process $X_t$ based on the roots of its transfer function $f(z)$, with the operator theoretic invertibility of its transfer function operator $f(T)$ (Proposition \ref{invertibility_equivalence_theorem}). There are numerous definitions for the invertibility of a stationary process $X_t$, some based on the roots of a polynomial (\cite{Harvey1990}), some based on an $\ell^1$ condition on the MA representation (\cite{BrockwellDavis1991}) and some based on the convergence of the AR polynomial on the unit circle, as in (\cite{BoxJenkins1976}). Proposition \ref{invertibility_equivalence_theorem} shows that the operator invertibility of $f(T)$ is consistent with these earlier definitions, and generalizes their applicability from the Wiener algebra $\mathbb{W}_+$ (corresponding to the $\ell^1$ condition) to the larger domain of $H^\infty$ functions. Further, the operator theoretic definition provides a clean algebraic picture within the world of stationary processes allowing us to place some heuristic connections between stationarity and invertibility, referred to as a {\em sort of} duality by \cite{Harvey1990}), in a more rigorous framework. Finally, for $f \in H^\infty$ we identify $f(T)$ with the Toeplitz operator $T_f$ (Proposition \ref{toeplitz_operator_theorem}), and this allows us to invoke the invertibility properties of $T_f$ to understand the invertibility of $f(T)$ and $X_t$.

The rest of the paper is organized as follows. We first recall some key definitions in Section \ref{section_preliminaries}. In Section \ref{section_operator_calculus} we substantiate the claimed theoretical gap based on a brief literature survey, and then establish a rigorous operator calculus for $f(B)$ and $f(T)$. We discuss invertibility including some historical background in Section \ref{section_invertibility} and mean ergodicity in Section \ref{section_mean_ergodicity}, and this is followed by a brief conclusion.

\section{Preliminaries} \label{section_preliminaries}

In this paper we work in the setting of a separable Hilbert space $\mathcal{H}$ (a complete inner product space with a countable basis) and its bounded linear operators $B(\mathcal{H})$. We recall the notion of an adjoint $T^*$ for an operator $T \in B(\mathcal{H})$ as the unique operator in $B(\mathcal{H})$ satisfying the adjoint condition $\langle Tx, y \rangle = \langle x, T^*y \rangle$.

\begin{definition}[Bilateral Shift Operator]
Let an orthnormal basis for $\mathcal{H}$ be given by $\{e_n:n\in\mathbb{Z}\}$. Then the bilateral shift operator with respect to this orthonormal basis is defined to be the bounded linear operator $B \in\mathcal{B}(\mathcal{H})$ satisfying $B(e_n) = e_{n+1}$.
\end{definition}

\begin{definition}[Unilateral Shift Operator]
Let an orthnormal basis for $\mathcal{H}$ be given by $\{e_n:n\in\mathbb{N}\}$. Then the unilateral shift operator with respect to this orthonormal basis is defined to be the bounded linear operator $T\in\mathcal{B}(\mathcal{H})$ satisfying $T(e_n) = e_{n+1}$.
\end{definition}

We now recall the following theorem of Nagy \cite[p.155]{Agler2002} which is key to connecting the bilateral shift operator $B$ to the unilateral shift operator $T$:
\begin{theorem}[Nagy dilation]
Every contraction $V$ in a Hilbert space $\mathcal{H}$ has a unitary dilation $U$ (in some superspace $\mathcal{H}'$ containing $\mathcal{H}$).
\end{theorem}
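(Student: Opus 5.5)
We prove the statement by the Schäffer construction, which writes the unitary dilation $U$ down explicitly. Dilation here means that $\mathcal{H} \subseteq \mathcal{H}'$ and $P_{\mathcal{H}} U^n|_{\mathcal{H}} = V^n$ for all $n \ge 0$, where $P_{\mathcal{H}}$ is the orthogonal projection onto $\mathcal{H}$.

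\emph{Defect operators.} Since $\|V\| \le 1$, both $I - V^*V$ and $I - VV^*$ are positive. We therefore set $D_V = (I - V^*V)^{1/2}$ and $D_{V^*} = (I - VV^*)^{1/2}$. The key intertwining relation is $V D_V = D_{V^*} V$. To get it, note that $V(I - V^*V) = (I - VV^*)V$, so $V p(I - V^*V) = p(I - VV^*)V$ for every polynomial $p$. Approximating $t \mapsto t^{1/2}$ uniformly on $[0,1]$ by polynomials and passing to the norm limit gives the relation. The adjoint form $D_V V^* = V^* D_{V^*}$ follows in the same way.

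\emph{The ambient space.} Take $\mathcal{H}' = \bigoplus_{n \in \mathbb{Z}} \mathcal{H}$, the $\ell^2$ direct sum, and identify $\mathcal{H}$ with the $0$-th coordinate. Define $U$ to act as the forward shift $x_{n} \mapsto x_{n+1}$ on every coordinate except the two adjacent coordinates $(-1, 0)$. On those two coordinates $U$ acts by the $2\times 2$ block
\[
\begin{pmatrix} V & D_{V^*} \\ D_V & -V^* \end{pmatrix},
\]
mapping $(x_0, x_{-1})$ to the new $(0, 1)$ components. Concretely, the output in slot $0$ is $Vx_0 + D_{V^*}x_{-1}$, and the output in slot $1$ is $D_V x_0 - V^* x_{-1}$.

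\emph{Unitarity.} This is the computational core. We check that the block is unitary on $\mathcal{H} \oplus \mathcal{H}$ by multiplying it with its adjoint in both orders. In each product the diagonal entries collapse by the definitions of $D_V$ and $D_{V^*}$, for instance $V^*V + D_V^2 = I$ and $VV^* + D_{V^*}^2 = I$. The off-diagonal entries vanish by the intertwining relations, for instance $V^* D_{V^*} - D_V V^* = 0$. Combining a unitary block with the pure shift on the remaining coordinates, whose domain and range slots are disjoint and fit together, gives an operator that is isometric and surjective. Hence $U$ is unitary.

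\emph{Compression of powers.} We prove $P_{\mathcal{H}} U^n h = V^n h$ by induction on $n$, for $h$ in the $0$-th slot. Starting from $h$, one application of $U$ puts $Vh$ in slot $0$ and $D_V h$ in slot $1$. All nonzero components other than slot $0$ then lie in slots $\ge 1$, and the shift only moves them further to the right. The block only draws slot $0$ and slot $-1$ into slot $0$, and slot $-1$ is always zero in this orbit. So after each step the slot-$0$ component is $V$ applied to the previous slot-$0$ component, which gives $V^n h$.

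The main obstacle is setting up the indexing so that the block and the shift together really form a bijective isometry. Once the slots are fixed, the proof reduces to the intertwining identity $V D_V = D_{V^*} V$, which is the only non-algebraic step.
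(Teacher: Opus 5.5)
Your proof is correct and complete. The $2\times 2$ block is unitary by exactly the identities you list, with $V^*D_{V^*} = D_V V^*$ obtained as the adjoint of $VD_V = D_{V^*}V$. The shift part carries the slots $\mathbb{Z}\setminus\{-1,0\}$ bijectively onto $\mathbb{Z}\setminus\{0,1\}$, so $U$ is unitary. Your induction, in which slot $-1$ stays zero along the orbit of $h$, gives $P_{\mathcal{H}}U^n|_{\mathcal{H}} = V^n$ for all $n\ge 0$.

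The paper gives no proof here: it quotes the theorem from Agler--McCarthy and only ever applies it to the pair $(T,B)$. Compared with that citation, your Sch\"affer construction is self-contained and needs nothing beyond the continuous functional calculus for the square roots. It also yields the power-dilation property for every $n$, and by adjoints $P_{\mathcal{H}}U^{*n}|_{\mathcal{H}} = V^{*n}$. This is the form actually required for the relation $f_a(V) = \mathrm{pr}\, f_a(U)$ the paper uses later, since the Abel sums $f_r$ involve all powers; the paper's gloss of ``pr'' mentions only $n=1$.

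What your construction does not give is minimality, which the paper also asserts for $B$: the space $\bigoplus_{\mathbb{Z}}\mathcal{H}$ is much too large. For $V=T$ one has $D_T=0$ and $D_{T^*}$ equal to the rank-one projection onto $\ker T^*$. Restricting $U$ to the reducing subspace $\bigvee_{n\in\mathbb{Z}}U^n\mathcal{H}$ then leaves precisely a bilateral shift. Alternatively, for an isometry you can skip Sch\"affer altogether, since $B$ is a unitary extension of $T$ and an extension is trivially a dilation. So the paper only needs the easy isometric case, while your argument proves the full statement for contractions as written.
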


The bilateral shift operator $B$ is in fact the minimal unitary dilation of the unilateral shift operator. In this context, we use the notation pr as in \cite[p.9]{NagyFoias2010} to indicate a projection from the superspace $\mathcal{H}'$ to the subspace $\mathcal{H}$. For instance, $T = \text{pr~}B$ indicates that $Tx = P_{\mathcal{H}}Ux$ where $P_{\mathcal{H}}$ is a projection operator that projects onto $\mathcal{H}$ and $x \in \mathcal{H}$.

We assume familiarity with the definitions of the function spaces $L^p$ and and sequence spaces $\ell^p$ for $1 \le p \le \infty$. We define the Hardy space $H^p$ as follows. We consider holomorphic functions $g : \mathbb{D} \to \mathbb{C}$ such that their radial limit on $S^1$ exists and is in $L^p(S^1)$. We identify $H^p$ with these limit functions from $S^1 \to \mathbb{C}$. For $f \in L^{\infty}$, we define the {\em Toeplitz operator} $T_f: H^2 \to H^2$ by $T_fg(z) := P(f(z)g(z))$ where $g \in H^2$ and $P$ represents the projection operator from $L^2$ to $H^2$.

\begin{definition}[Wiener Algebra]
The Wiener algebra $\mathbb{W}$ consists of functions on the unit circle $f:S^1 \to \mathbb{C}$ representable as a power series $f(z) = \sum_{n=-\infty}^{\infty} a_n z^n$ with $(a_n) \in \ell^1$. We use the notation $\mathbb{W}_+$ to denote the subspace of these functions with $a_n = 0$ for $n < 0$.
\end{definition}

We next define a stationary stochastic process or sequence starting with a probability space $(\Omega, \mathcal{F}, \mathbb{P})$. Let $L^2(\Omega)$ be the Hilbert space of random variables $X$ on $\Omega$ having finite second moment.

\begin{definition}[Stationary Process]
A weakly stationary sequence (resp. bi-sequence) of random variables $\{ X_n \}_{n=0}^{\infty}$ (resp. $\{ X_n \}_{n=-\infty}^{\infty}$) on $\Omega$ is defined to be a sequence (resp. bi-sequence) in $L^2({\Omega})$ such that cov$(X_n, X_{n+k})$ only depends on $k$ for all $k \in \mathbb{Z}$. We will omit the word weakly henceforth, use the term stationary process to refer to both sequences and bi-sequences of such random variables.
\end{definition}

We now recall the famous decomposition theorem of Wold as stated in \citep[p.109]{Hamilton1994}.

\begin{theorem}
{\em Wold Decomposition}: Any zero-mean covariance-stationary process $X_t$ can be represented in the form $X_t = \sum_{j=0}^{\infty} x_j \epsilon_{t-j} + \kappa_t.$

\end{theorem}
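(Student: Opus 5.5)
We prove the Wold decomposition in the Hilbert space $L^2(\Omega)$, using orthogonal projections onto the closed linear spans of the past of the process. For each $t$ let $\mathcal{H}_t = \overline{\mathrm{span}}\{X_s : s \le t\}$ and $\mathcal{H}_{-\infty} = \bigcap_t \mathcal{H}_t$.

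\textbf{Step 1: the innovations.} Define $\epsilon_t := X_t - P_{\mathcal{H}_{t-1}} X_t$, the one-step prediction error. By construction $\epsilon_t \in \mathcal{H}_t$ and $\epsilon_t \perp \mathcal{H}_{t-1}$. So the $\epsilon_t$ are mutually orthogonal. Stationarity gives that the map $X_s \mapsto X_{s+1}$ extends to an isometry $V$ on $\overline{\mathrm{span}}\{X_s: s\in\mathbb{Z}\}$ with $V\mathcal{H}_t = \mathcal{H}_{t+1}$. Hence $V\epsilon_t = \epsilon_{t+1}$ and $\mathbb{E}|\epsilon_t|^2 = \sigma^2$ is constant, so $\{\epsilon_t\}$ is white noise. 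If $\sigma^2 = 0$, then $X_t \in \mathcal{H}_{t-1}$ for all $t$, hence $X_t \in \mathcal{H}_{-\infty}$. The statement then holds with $x_j = 0$ and $\kappa_t = X_t$.

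\textbf{Step 2: the moving-average part.} For each $k \ge 1$ we have the orthogonal decomposition
\[ \mathcal{H}_t = \overline{\mathrm{span}}\{\epsilon_t,\dots,\epsilon_{t-k+1}\} \oplus \mathcal{H}_{t-k}. \]
This follows by induction, since $\mathcal{H}_s = \mathrm{span}\{\epsilon_s\} \oplus \mathcal{H}_{s-1}$. Put $x_j := \langle X_t, \epsilon_{t-j}\rangle/\sigma^2$; by stationarity this does not depend on $t$. Bessel's inequality gives $\sum_j |x_j|^2 \sigma^2 \le \mathbb{E}|X_t|^2$, so $\sum_j x_j\epsilon_{t-j}$ converges in $L^2(\Omega)$. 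Letting $k\to\infty$ gives
\[ X_t = \sum_{j\ge 0} x_j \epsilon_{t-j} + P_{\mathcal{H}_{t-k}}\text{-remainder}. \]

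\textbf{Step 3: the deterministic part, which is the main obstacle.} Set $\kappa_t := X_t - \sum_{j\ge0} x_j\epsilon_{t-j}$. We must show $\kappa_t = P_{\mathcal{H}_{-\infty}} X_t$. Equivalently, we must show that $\mathcal{H}_t \ominus \mathcal{H}_{-\infty}$ is exactly the closed span $\overline{\mathrm{span}}\{\epsilon_s: s\le t\}$.

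\emph{First inclusion.} If $y \in \mathcal{H}_t$ is orthogonal to every $\epsilon_s$ with $s \le t$, then by the decomposition of Step 2, $y \in \mathcal{H}_{t-k}$ for every $k$. Hence $y \in \mathcal{H}_{-\infty}$.

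\emph{Second inclusion.} Each $\epsilon_s$ is orthogonal to $\mathcal{H}_{s-1} \supseteq \mathcal{H}_{-\infty}$. Together with the first inclusion, this gives exactly the Wold orthogonal splitting. It is the Hilbert-space form of von Neumann's decomposition of $V$ into a unitary part on $\mathcal{H}_{-\infty}$ and a pure isometry (a unilateral shift) on its complement, which is the connection the paper exploits later.

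\textbf{Step 4: the remaining properties.} It follows that $\kappa_t \in \mathcal{H}_{-\infty}$. Hence $\kappa_t$ is uncorrelated with every $\epsilon_s$ and is linearly deterministic, meaning it is perfectly predictable from its own past. Finally, $x_0 = \langle X_t,\epsilon_t\rangle/\sigma^2 = \langle \epsilon_t,\epsilon_t\rangle/\sigma^2 = 1$.

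The delicate point is the identification in Step 3 of the orthogonal complement of $\mathcal{H}_{-\infty}$ with the span of the innovations. Once that is established, the rest is routine bookkeeping with Bessel's inequality and stationarity.
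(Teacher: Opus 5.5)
The paper does not prove this statement. It quotes it from Hamilton (1994, p.~109) as background, and the only related content is the introduction's remark tying it to von Neumann's decomposition of an isometry into a unitary part and a pure shift. So there is no proof in the paper to compare against. Your argument is the standard Kolmogorov--Wold projection proof, and it is correct. The innovations $\epsilon_t = X_t - P_{\mathcal{H}_{t-1}}X_t$ are white noise by the shift isometry, and Bessel's inequality gives $\sum_j |x_j|^2<\infty$. Your two inclusions in Step 3 establish $\mathcal{H}_t=\overline{\mathrm{span}}\{\epsilon_s:s\le t\}\oplus\mathcal{H}_{-\infty}$. Since $\{\epsilon_{t-j}/\sigma\}_{j\ge0}$ is an orthonormal basis of the first summand, the projection of $X_t$ onto it is $\sum_j x_j\epsilon_{t-j}$, and so $\kappa_t=P_{\mathcal{H}_{-\infty}}X_t$.

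Three small points.

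\emph{The von Neumann remark.} You attribute the splitting to the decomposition of $V$, but $V$ is unitary on $\overline{\mathrm{span}}\{X_s:s\in\mathbb{Z}\}$, and it does not map $\mathcal{H}_t$ into itself. The isometry whose decomposition you actually computed is the backshift $V^{-1}$ restricted to $\mathcal{H}_t$. Its wandering subspace is $\mathcal{H}_t\ominus\mathcal{H}_{t-1}=\mathrm{span}\{\epsilon_t\}$, its unitary part lives on $\mathcal{H}_{-\infty}$, and on the rest it acts as the unilateral shift $\epsilon_{t-n}\mapsto\epsilon_{t-n-1}$. This is exactly the paper's point that the lag on the non-deterministic part should be modelled by $T$ rather than $B$.

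\emph{Predictability of $\kappa_t$.} The inclusion $\kappa_t\in\mathcal{H}_{-\infty}\subseteq\mathcal{H}_{t-1}$ gives predictability from the past of $X$, which is Hamilton's notion. The claim ``from its own past'' needs one more line. Because $P_{\mathcal{H}_{-\infty}}X_s=\kappa_s$ for every $s$ and $\mathcal{H}_{-\infty}\subseteq\mathcal{H}_{t-1}$, the projection maps the dense subspace $\mathrm{span}\{X_s:s\le t-1\}$ of $\mathcal{H}_{t-1}$ onto $\mathrm{span}\{\kappa_s:s\le t-1\}$. That span is therefore dense in $P_{\mathcal{H}_{-\infty}}\mathcal{H}_{t-1}=\mathcal{H}_{-\infty}$, which contains $\kappa_t$.

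\emph{Indexing.} You implicitly take $t\in\mathbb{Z}$. This is needed both for $V\mathcal{H}_t=\mathcal{H}_{t+1}$ and for the innovations to have constant variance. It is Hamilton's setting, but the paper's definition of a stationary process also admits one-sided sequences, which would first have to be extended to bi-infinite ones.
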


Here $\epsilon_t$ is white noise representing {\it linear forecasting error}, also called {\it innovation}, $x_0 = 1$ and $\sum_{j=0}^{\infty} x_j^2 < \infty$ (i.e. $\{x_j\} \in \ell^2$). $\sum_{j=0}^{\infty} x_j \epsilon_{t-j}$ is the linearly non-deterministic component while $\kappa_t$ is called the lineary deterministic component. We focus on the case $\kappa_t = 0$ in which case $X_t$ is called a purely linearly non-deterministic process. In the interest of brevity, we use the term stationary process in the present article to refer exclusively to these purely linearly non-deterministic stationary processes.

A stationary process represented as a linear combination of white noise processes $\epsilon_t$, as in the Wold decomposition, is called a moving average ($MA$) process. As per the theorem above, a purely linearly non-deterministic stationary process always has a moving average representation. A representation of a stationary process $X_t$ based on a linear combination of past random variables $X_{t-j}$ for $j > 0$ is called an autoregressive ($AR$) representation. An $AR$ represenation is not always guaranteed, and the following notion of invertibility tells us when an $AR$ representation is possible for a given stationary process. We list multiple definitions of invertibility starting with the simpler one.

\begin{definition}[Mann-Wald Stationarity]
Given an AR process with $\epsilon_t = \sum_{n=0}^{N} a_n X_{t-n}$, the roots of the polynomial $p(z) = \sum_{n=0}^{N} a_n z^{N-n}$ should lie {\em inside} the unit circle $S^1$.
\end{definition}

The upcoming definition of invertibility in \cite[p.65]{Harvey1990} looks somewhat similar to the above definition of stationarity, but involves two changes. The first is that it starts with a MA representation rather than an AR representation. The other change is that the polynomial considered has reversed order which leads to roots being {\em outside} the circle, and this is the standard convention at this time. The reversal makes no substantive difference, and as we will see later in the text.

\begin{definition}[Harvey Invertibility]
A moving average process $X_t = \sum_{n=0}^{N} a_n \epsilon_{t-n}$ is said to be invertible when all the roots of the polynomial $f(z) = \sum_{n=0}^{N} a_n z^n$ lie {\em outside} the unit circle $S^1$.
\end{definition}

The Harvey definition can only be applied when the transfer function has a finite number of terms. This limitation is removed in the following generalized definition.

\begin{definition}[$\ell^1$ Invertibility]
\cite[p.86]{BrockwellDavis1991} A stationary process $X_t$ is said to be invertible if we can rewrite the same as an autoregressive $AR(\infty)$ process as $X_t = \epsilon_t + \sum_{n=1}^{\infty} b_n X_{t-n}$ with $\sum_{n=0}^{\infty} |b_n| < \infty$.
\end{definition}

The condition $\sum_{n=0}^{\infty} |b_n| < \infty$ is referred to generically as the $\ell^1$ condition in this article, and corresponds to the constraint $f \in \mathbb{W}_+$ on the transfer function $f$. The essential part of this definition is that we can rewrite an invertible stationary process as an autoregressive process, and the $\ell^1$ condition can be further relaxed as we show later in the text.

\section{Operator Calculus} \label{section_operator_calculus}

Time series literature makes use of the notation $f(B)$ referring to operator valued transfer functions, but it does not specific the nature of functions $f$ for which $f(B)$ is well-defined. The notation $f(B)$ is first observed in \cite[p.127, Eq.(2.10)]{Whittle1953}, and then subsequently in numerous texts like \cite[p.14]{BoxJenkins1976}, \cite[p.25]{Hamilton1994} or \cite[p.86]{Shumway2006}, thus becoming standard over the years. \cite[p.83]{BrockwellDavis1991} address this lack of rigor indirectly by focusing on random variable convergence rather than operator definition per se, and end up prescribing the somewhat restrictive $\ell^1$ condition. We also note that the rigorous treatments in \cite{Grenander1957, Rozanov1963} and \cite{Hannan1970}, avoid the use of the operator formalism $f(B)$. While they introduce the bilateral shift operator $B$, its powers $B^n$ and their finite linear combinations, they studiously avoid the use of their infinite sums and the suggestive notation $f(B)$. In the current article, we take a rigorous look at these infinite sums starting with $f \in \mathbb{W}_+$, in line with the above $\ell^1$ condition. We prove that $f(B)$ and $f(T)$ are well-defined in this case, and later generalize this result to the larger family of $H^{\infty}$ functions.

Let $f \in \mathbb{W}_+$ with $f(z) = \sum_{n=0}^{\infty} a_n z^n$ restricted to positive powers of $z$, represent a transfer function. Consider the following power series expressions:
\begin{align*}
    f(B) = \sum_{n=0}^{\infty} a_n B^n \\
    f(T) = \sum_{n=0}^{\infty} a_n T^n.
\end{align*}

We claim that the power series expressions for $f(B)$ and $f(T)$ are well defined.

\begin{proposition} \label{existence_theorem_w}
For $f \in \mathbb{W}_+$, $f(B)$ is a well defined bounded linear operator on the domain of definition of $B$, and the power series converges under the operator norm.
\end{proposition}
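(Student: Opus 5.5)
The plan is to show that the partial sums $S_N = \sum_{n=0}^{N} a_n B^n$ form a Cauchy sequence in the Banach space $\mathcal{B}(\mathcal{H})$ with the operator norm. The limit, which we call $f(B)$, is then automatically a bounded linear operator on $\mathcal{H}$, the domain of definition of $B$. The key input is that $B$ is unitary, or at least an isometry. Since $B$ maps the orthonormal basis $\{e_n\}$ bijectively onto itself, it preserves norms of finite linear combinations. By continuity it preserves norms on all of $\mathcal{H}$, so $\|B\| = 1$. Submultiplicativity of the operator norm then gives $\|B^n\| \le \|B\|^n = 1$; in fact $\|B^n\| = 1$ because $B^n$ is also an isometry.

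The main step is the estimate, for $M > N$,
\begin{align*}
\|S_M - S_N\| = \Bigl\| \sum_{n=N+1}^{M} a_n B^n \Bigr\| \le \sum_{n=N+1}^{M} |a_n| \, \|B^n\| \le \sum_{n=N+1}^{M} |a_n|.
\end{align*}
Because $f \in \mathbb{W}_+$, the sequence $(a_n)$ is in $\ell^1$, so the tail sums tend to $0$ and $(S_N)$ is Cauchy in operator norm. Completeness of $\mathcal{B}(\mathcal{H})$ then yields a limit $f(B) \in \mathcal{B}(\mathcal{H})$ with $\|S_N - f(B)\| \to 0$.

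Two further facts follow from the same estimate. First, passing to the limit gives the bound $\|f(B)\| \le \sum_{n} |a_n| = \|f\|_{\mathbb{W}}$. Second, the definition is unambiguous: the series converges absolutely, so its limit does not depend on the order of summation. Linearity of $f(B)$ comes for free, since it is a norm limit of linear operators.

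I do not expect a genuine obstacle. The only point needing care is justifying completeness of $\mathcal{B}(\mathcal{H})$, which is a standard fact: a Cauchy sequence of operators converges pointwise, and the pointwise limit is bounded, linear, and is the norm limit. A second point is confirming $\|B\|=1$ from the definition, which specifies $B$ only on a basis; this means checking that the basis-defined map extends isometrically. The same argument applies verbatim to $f(T)$, since $T$ is an isometry and so $\|T^n\| = 1$.
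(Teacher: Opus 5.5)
Your proposal is correct and uses the same Banach-algebra argument as the paper, namely the bound $\|\sum a_n B^n\| \le \sum |a_n|\,\|B^n\| = \sum |a_n| = \|f\|_{\mathbb{W}}$, which rests on $\|B\| = 1$. You also spell out the Cauchy-sequence and completeness step that the paper's one-line norm estimate leaves implicit; that step is what actually justifies operator-norm convergence of the series.
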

\begin{proof}
We prove that the norm of $f(B)$ is bounded by looking at its power series expansion. This is a simple Banach algebra argument based on the norm of $B$.
\begin{gather*}
\|f(B)\| = \| \sum a_n B^n \| \le \sum |a_n B^n| = \sum |a_n| = \| f \|_{\mathbb{W}}.
\end{gather*}
\end{proof}

Our existence results for $H^\infty$ follow directly from \cite[p.112, Chapter III, Section 2]{NagyFoias2010}. The required results are scattered across the text, so we summarize them below, starting with the following Abel sum definition.

\begin{definition}[Abel Sum]
Given $f \in H^\infty$ as before we have the following definitions of $f_a(B)$ and $f_r(B)$.
\[ f_a(B) = \lim_{r \to 1-0} f_r(B) = \lim_{r \to 1-0} \sum_{n=0}^{\infty} r^n a_n B^n \]
\end{definition}

Note that this definition is a departure from the spectral definition of $f(B)$ traditionally used in time series analysis. We now summarize their key results as follows:

\begin{theorem}[Nagy-Foias] \label{nagy_foias_theorem}
For an isometric operator $V$, a unitary operator $U$, a function $f \in H^{\infty}$, and a sequence of functions $f_n$ also in $H^{\infty}$ we have the following:
\begin{enumerate}[label=(\alph*),nosep]
	\item $f_r(U)$ and $f_r(V)$ converge strongly to the Abel sums $f_a(U)$ and $f_a(V)$ respectively. \label{enum_abel_convergence}
	\item If $f_n(z)$ converges to $f(z)$ uniformly on the open unit disk $\mathbb{D}$, then $f_n(U)$ converges to $f_a(U)$ under the operator norm. \label{enum_uniform_convergence}
	\item If $f_n$ converges to $f_a$ boundedly on the unit circle $S^1$, $f_n(U)$ converges to $f_a(U)$ under the strong operator topology. \label{enum_bounded_convergence}
	\item $\|f_a(V)\| \le \|f\|_\infty$. \label{enum_norm_inequality}
	\item When $U$ is the minimal unitary dilation of $V$, we also have $f_a(V) = {\text pr} f_a(U)$. \label{enum_projection}
\end{enumerate}
\end{theorem}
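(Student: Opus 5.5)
Since the Nagy--Foias theorem is a summary of results from \cite[Chapter III]{NagyFoias2010}, the plan is to reconstruct it from the standard $H^\infty$ functional calculus for contractions, treating $V$ and $U$ together. The basic tool is the Abel means $f_r(z)=\sum_n r^n a_n z^n$ for $0<r<1$. Because $|a_n|\le \|f\|_\infty$ and $r^n$ decays geometrically, each $f_r$ lies in $\mathbb{W}_+$. Therefore $f_r(U)$ and $f_r(V)$ are norm-convergent power series, exactly as in Proposition \ref{existence_theorem_w}, and each is a bounded operator with norm at most $\sum r^n|a_n|<\infty$.

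\textbf{Unitary case.} I start with $U$ and use its spectral measure $E$ on $S^1$. The power series gives $f_r(U)=\int f_r\,dE$, where $f_r$ is the Poisson integral of the boundary function $f$. I then prove \ref{enum_abel_convergence}, \ref{enum_uniform_convergence} and \ref{enum_bounded_convergence} in that order.
\begin{enumerate}[label=(\roman*),nosep]
\item For \ref{enum_abel_convergence}, write
\[
\|f_r(U)x-g(U)x\|^2=\int|f_r-f|^2\,d\langle Ex,x\rangle ,
\]
where $g(U)=\int f\,dE$ is the spectral operator. Strong convergence then follows by dominated convergence, since $|f_r|\le\|f\|_\infty$.
\item This requires $f_r\to f$ almost everywhere with respect to $\langle Ex,x\rangle$, not merely Lebesgue-a.e. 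I expect this to be the main obstacle. For the minimal unitary dilation of a pure isometry, i.e.\ the bilateral shift $B$, the spectral measure is absolutely continuous with respect to Lebesgue measure, so Fatou's radial limit theorem suffices. For a general unitary $U$ with a singular part, I would follow Nagy--Foias: split off the absolutely continuous part, and on the singular part define the Abel limit directly as the strong limit, proving that it is Cauchy via a weak-convergence plus norm-bound argument.
\item Part \ref{enum_uniform_convergence} follows from the spectral estimate $\|\int h\,dE\|\le\sup|h|$ applied to $h=f_n-f$.
\item Part \ref{enum_bounded_convergence} follows from dominated convergence again, applied to bounded a.e.\ convergence on $S^1$.
\end{enumerate}

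\textbf{Isometric case.} For the isometry $V$ I take the minimal unitary dilation $U$. Since $V$ is an isometry, $U^n$ compresses to $V^n$ for $n\ge0$; this holds because $\mathcal H$ is invariant under $U$ and $U|_{\mathcal H}=V$. Passing termwise through the norm-convergent series gives $f_r(V)=\mathrm{pr}\,f_r(U)$. Compression is continuous in the strong operator topology, so letting $r\to1$ and using \ref{enum_abel_convergence} for $U$ yields both the strong convergence of $f_r(V)$ (part \ref{enum_abel_convergence} for $V$) and the identity \ref{enum_projection}. For \ref{enum_norm_inequality}, I combine $\|f_a(V)\|\le\|f_a(U)\|\le\|f\|_\infty$ with the spectral bound. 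Alternatively, von Neumann's inequality gives $\|f_r(V)\|\le\sup_{\mathbb D}|f_r|\le\|f\|_\infty$, and the bound passes to the strong limit.
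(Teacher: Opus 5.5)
The gap is in your step (ii), and it cannot be closed. On the singular part of the spectral measure the Abel limit need not exist, so part (a) as stated is false for a general unitary $U$, and likewise for a general isometry $V$.

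\emph{Counterexample to (a).} Take $U=V=I$ on $\mathbb{C}$, whose spectral measure is the point mass at $1$. Let $f(z)=(1-z)^{i}=\exp\bigl(i\operatorname{Log}(1-z)\bigr)$. Since $\operatorname{Re}(1-z)>0$ on $\mathbb{D}$, we have $|f(z)|=e^{-\arg(1-z)}\le e^{\pi/2}$, so $f\in H^\infty$. But $f_r(U)=\sum_n r^n a_n=f(r)=e^{i\log(1-r)}$ winds around the unit circle infinitely often as $r\to1$. There is no strong, or even weak, limit, so no ``weak convergence plus norm bound'' argument can produce one. (Bounded weakly convergent families need not be strongly Cauchy in any case.)

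\emph{Counterexample to (c).} With the same $U$, the polynomials $f_n(z)=\bigl(\tfrac{1+z}{2}\bigr)^n$ tend to $0$ boundedly a.e.\ on $S^1$, while $f_n(U)=1$ for every $n$.

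This is also not what Nagy--Foias do. Their $H^\infty$ calculus is set up only for contractions whose unitary part is absolutely continuous (in particular c.n.u.\ contractions and absolutely continuous unitaries), because $f$ is defined only Lebesgue-a.e.\ on $S^1$. Your isometric case inherits the same problem. The Wold decomposition of $V$ may contain a singular unitary summand, and that summand passes unchanged into the minimal unitary dilation.

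The statement therefore has to be read with $U$ absolutely continuous and $V$ an isometry whose unitary part is absolutely continuous, for example a pure isometry. This covers the only cases the paper actually uses, $B$ and $T$. The paper gives no proof of its own beyond citing Nagy--Foias.

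Under that hypothesis your argument is correct and is essentially the Nagy--Foias route:
\begin{enumerate}[label=(\roman*),nosep]
\item $f_r(U)=\int f_r\,dE$, and Fatou's theorem gives $f_r\to f$ a.e.\ with respect to every $\langle Ex,x\rangle$.
\item Dominated convergence then gives (a) and (c), and the bound $\|\int h\,dE\|\le\sup|h|$ gives (b).
\item The minimal unitary dilation of a pure isometry is a bilateral shift, hence absolutely continuous.
\item Restriction to the invariant subspace $\mathcal H$ then gives (a) for $V$, together with (e) and (d).
\end{enumerate}
You should drop the singular-part programme and state the absolute-continuity hypothesis explicitly instead.
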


In the above result, $f_n$ refers to a general sequence of functions, not necessarily partial sums of $f$. In fact, for $f \in H^\infty$, the partials sums of $f$ do not, in general, converge boundedly to $f(z)$. Thus, in-lieu of partial sum convergence as in \ref{enum_uniform_convergence} or \ref{enum_bounded_convergence}, we have to be satisfied with Abel sum convergence as in \ref{enum_abel_convergence}. 

\begin{proposition}[Bilateral Functional Calculus] \label{unitary_existence_theorems}
For $f \in H^\infty$, $f_a(B)$ and $f_a(T)$ are well defined as the strong operator limit of $f_r(B)$ and $f_r(T)$ respectively, and $f_b(T) = \text{pr}~f_a(B)$.
\end{proposition}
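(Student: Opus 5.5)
The plan is to reduce the statement to Theorem \ref{nagy_foias_theorem} by checking that $B$ and $T$ satisfy its hypotheses. The bilateral shift $B$ is unitary: it maps the orthonormal basis $\{e_n:n\in\mathbb{Z}\}$ bijectively onto itself, so it is a surjective isometry and $B^*=B^{-1}$ with $B^*e_n=e_{n-1}$. The unilateral shift $T$ is an isometry, since it maps $\{e_n:n\in\mathbb{N}\}$ onto the orthonormal set $\{e_{n+1}\}$ and so preserves norms of finite linear combinations, hence of all vectors. It is not unitary, as $e_0$ is not in its range. So $B$ can play the role of $U$ and $T$ the role of $V$.

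First, for each fixed $r\in(0,1)$ the series $f_r(B)=\sum_{n\ge 0} r^n a_n B^n$ and $f_r(T)$ converge in operator norm. Since $f\in H^\infty\subset H^2$, the coefficients satisfy $|a_n|\le \|f\|_2\le\|f\|_\infty$. Hence $\sum r^n|a_n|<\infty$, so $f_r\in\mathbb{W}_+$, and the argument of Proposition \ref{existence_theorem_w} (which only uses $\|B^n\|,\|T^n\|\le 1$) gives bounded operators $f_r(B)$ and $f_r(T)$. Part \ref{enum_abel_convergence} of Theorem \ref{nagy_foias_theorem}, applied with $U=B$ and $V=T$, then says that $f_r(B)$ and $f_r(T)$ converge strongly as $r\to 1-0$. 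Their limits $f_a(B)$ and $f_a(T)$ are linear, and they are bounded, with $\|f_a(T)\|\le\|f\|_\infty$ by part \ref{enum_norm_inequality}. The same bound for $f_a(B)$ follows because $f_a(B)$ is a strong limit of operators with norm at most $\sup_{\mathbb{D}}|f|=\|f\|_\infty$.

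The projection identity, which I read as $f_a(T)=\text{pr}~f_a(B)$ (the subscript $b$ in the statement appears to be a typo for $a$), should follow from part \ref{enum_projection}. This is the one step needing real verification: that $B$ is the \emph{minimal} unitary dilation of $T$, as asserted in the preliminaries. I would embed $\mathcal{H}=\overline{\text{span}}\{e_n:n\ge 0\}$ into $\mathcal{H}'=\overline{\text{span}}\{e_n:n\in\mathbb{Z}\}$. On $\mathcal{H}$, $B$ acts exactly as $T$, and $\mathcal{H}$ is invariant under $B$, so $P_{\mathcal{H}}B^n|_{\mathcal{H}}=T^n$ for all $n\ge 0$; this is the dilation property. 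Minimality means $\mathcal{H}'=\bigvee_{n\in\mathbb{Z}}B^n\mathcal{H}$, which holds because $B^{-k}e_0=e_{-k}$ reaches every basis vector.

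As a sanity check, the identity can also be seen directly. For each $r$, $P_{\mathcal{H}}f_r(B)|_{\mathcal{H}}=f_r(T)$ by termwise compression of the norm-convergent series. Compression is continuous in the strong operator topology, so letting $r\to 1-0$ gives $\text{pr}~f_a(B)=f_a(T)$. This argument avoids part \ref{enum_projection} altogether, so the only genuine obstacle left is the strong convergence, which Theorem \ref{nagy_foias_theorem}\ref{enum_abel_convergence} supplies.
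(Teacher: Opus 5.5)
Your proposal is correct and follows the paper's proof. That proof only observes that $B$ is unitary, that $T$ is an isometry, and that $B$ is the minimal unitary dilation of $T$, and then cites parts \ref{enum_abel_convergence} and \ref{enum_projection} of Theorem \ref{nagy_foias_theorem}. You do the same, but you actually verify these facts (including minimality) and rightly read $f_b$ as $f_a$; your compression argument, which gets $\text{pr}~f_a(B)=f_a(T)$ directly from the strong limits, is a nice extra that makes part \ref{enum_projection} unnecessary.

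One caveat you share with the paper is that part \ref{enum_abel_convergence}, as stated for arbitrary unitaries and isometries, is too general. For $U=1$ on $\mathbb{C}$ and $f(z)=(1-z)^{i}\in H^\infty$ one gets $f_r(U)=e^{i\log(1-r)}$, which has no limit as $r\to 1-0$. What Nagy--Foias actually prove needs the unitary to have absolutely continuous spectral measure, or the contraction to be completely non-unitary. So the hypothesis check you announce should also record that $B$, being unitarily equivalent to multiplication by $z$ on $L^2(S^1)$, has spectral measure absolutely continuous with respect to Lebesgue measure, and that $T$ is a pure isometry.
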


\begin{proof}
$B$ is a unitary operator and $T$ is an isometry, and further $B$ is the minimal unitary dilation of $T$. The proposition now follows immediately from \ref{enum_abel_convergence} and \ref{enum_projection} above.
\end{proof}

Note that we don't have power series convergence in general for $f \in H^{\infty}$, unless the partial sums of $f$ happen to converge boundedly, but we still have a well defined operator based on Abel sums. In fact, the power series of $f$ must be regarded in general as a symbolic shorthand for the Abel sum, though operator norm convergence holds for $f \in \mathbb{W}_+$.

We will now prove an isometry result, that $\|f(T)\| = \|f\|_{\infty}$. For the specific case of the unilateral shift operator $T$, this result is an improvement on the norm inequality \ref{enum_norm_inequality} above. Further, it is known that the Szeg\"o kernel is the eigenvector of the adjoint of the multiplication operator \cite[p.10]{Agler2002}. The proof below contains an interesting variant of this idea.

\begin{proposition} [Isometry] \label{hinfinity_isometry_theorem}
For $f \in H^\infty$, $\|f(T)\| = \|f\|_{\infty}$.
\end{proposition}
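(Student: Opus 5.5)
The plan is to prove the two inequalities $\|f(T)\| \le \|f\|_\infty$ and $\|f(T)\| \ge \|f\|_\infty$ separately. Throughout, $f(T)$ means the Abel sum $f_a(T)$ from Proposition \ref{unitary_existence_theorems}.

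The upper bound is immediate. $T$ is an isometry, so part \ref{enum_norm_inequality} of Theorem \ref{nagy_foias_theorem} gives $\|f_a(T)\| \le \|f\|_\infty$.

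For the lower bound I would realize $T$ concretely. Identify $\mathcal{H}$ with $H^2$ by sending $e_n \mapsto z^n$, so that $T$ becomes multiplication by $z$. For a polynomial $g$ we have $f_r(T)g = f_r g$, where $f_r(z) = f(rz)$. For $r<1$, $f_r$ lies in $\mathbb{W}_+$ (in fact its coefficients decay geometrically), so $f_r(T)$ is just multiplication by $f_r$. As $r \to 1$, $f_r g \to fg$ in $H^2$ because $f_r \to f$ in $L^2$ and $g$ is bounded. Since $f_a(T)$ is the strong limit of $f_r(T)$, we get $f_a(T)g = fg$ on polynomials. Both sides are bounded operators, so $f_a(T)$ is the multiplication operator $M_f$ on all of $H^2$.

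Next I would use the Szeg\"o kernel, in the variant the paper hints at. For $\lambda \in \mathbb{D}$, let $k_\lambda(z) = (1-\bar\lambda z)^{-1}$, which satisfies $\langle h, k_\lambda\rangle = h(\lambda)$ for $h \in H^2$. Then
\[ \langle g, M_f^* k_\lambda \rangle = \langle fg, k_\lambda \rangle = f(\lambda) g(\lambda) = \langle g, \overline{f(\lambda)} k_\lambda\rangle, \]
so $M_f^* k_\lambda = \overline{f(\lambda)} k_\lambda$. This gives
\[ |f(\lambda)| \le \|M_f^*\| = \|M_f\| \quad \text{for every } \lambda \in \mathbb{D}. \]
Taking the supremum over $\lambda$, and using that the supremum of $|f|$ on $\mathbb{D}$ equals the essential supremum of its radial limits on $S^1$ (Poisson representation and the maximum principle), yields $\|f(T)\| \ge \|f\|_\infty$.

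An alternative route to the lower bound avoids kernels. One computes
\[ \|M_f z^n p\| = \|fp\|_{L^2}, \]
and then chooses trigonometric polynomials whose modulus concentrates where $|f|$ is close to its essential supremum.

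I expect the main obstacle to be the identification step $f_a(T) = M_f$, that is, checking that the Abel-sum operator really acts by multiplication. This requires $f_r \to f$ in $L^2(S^1)$, which follows from the standard $H^2$ radial-limit theory. Once that identification is in hand, the eigenvector argument is routine.
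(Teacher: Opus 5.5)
Your proposal is correct and follows essentially the same route as the paper: the upper bound from part \ref{enum_norm_inequality} of Theorem \ref{nagy_foias_theorem}, a unitary identification of $f(T)$ with the multiplication operator $M_f$ on $H^2$, and a Szeg\"o kernel lower bound $|f(\lambda)| \le \|M_f\|$ for $\lambda \in \mathbb{D}$. You state the kernel step as an adjoint eigenvector identity, where the paper uses the equivalent quotient $\langle M_f K_w, K_w\rangle / \|K_w\|^2$; your identification via $f_r(T) = M_{f_r}$ and the strong limit on polynomials is more careful than the paper's termwise manipulation of $\sum a_n T^n$ (which need not converge for general $f \in H^\infty$), and you also justify passing from $\sup_{\mathbb{D}}|f|$ to the essential supremum on $S^1$, which the paper leaves implicit.
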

\begin{proof}
For $f \in H^\infty$, let $M_{f(z)}$ denote the multiplication operator on $H^2(\mathbb{D})$ such that $M_{f(z)}g(z) = f(z)g(z)$ for $g \in H^2(\mathbb{D})$. The proof notes that $f(T)$ is essentially the same as this multiplication operator $M_{f(z)}$, and $T$ is essentially $M_z$; and for reproducing kernel Hilbert spaces we can show that $\| M_{f(z)}\| \ge \| f \|_{\infty}$. The following is a slight elaboration of these ideas.

Consider the map $U:H^2(\mathbb{D}) \to \mathcal{H}$ defined by $U(z^n) := e_{n+1}$ for $n\geq0$. We can see that $M_z = U^*TU$. Now, $U$ maps an orthonormal basis to another and hence is a unitary linear map. By continuity and linearity of the operator $U$, we get
\begin{align*}
	f(T) &= \sum\limits_{n=0}^\infty a_nT^n = \sum\limits_{n=0}^\infty a_n U M_z^n U^*=U\left(\sum\limits_{n=0}^\infty a_nM_z^n\right)U^* \\
	&= U M_{\sum\limits_{n=0}^\infty a_nz^n}U^* = U M_{f(z)}U^*.
\end{align*}
Thus, $M_{f(z)}$ is unitarily related to $f(T)$, and $\| M_{f(z)} \| = \| f(T) \|$. But $\|f(T)\|\leq\|f\|_{\infty}$, and thus it suffices to show that $\left\|M_{f(z)}\right\| \geq \|f\|_\infty$.

Now, $H^2(\mathbb{D})$ is a reproducing kernel Hilbert space with respect to the Szeg\"o kernel given by:
\[ K_w(z) = \frac{1}{1-\overline{w}z}. \]
Based on the reproducing property of $K_w(z)$ we have:
\[\left\langle M_{f(z)}K_w(z), K_w(z)\right\rangle = \left\langle f(z)K_w(z), K_w(z)\right\rangle = f(w)K_w(w).\]
Further, $K_w(w) = \left\langle K_w(z), K_w(z)\right\rangle = \left\|K_w\right\|^2>0$. Thus,
\[\left\|M_{f(z)}\right\| \geq \left|\frac{\left\langle M_{f(z)}K_w(z), K_w(z)\right\rangle}{\left\langle K_w(z), K_w(z)\right\rangle}\right| = |f(w)|. \]
Since $w\in\mathbb{D}$ is arbitrary in the above equation, we obtain
\[\left\|M_{f(z)}\right\| \geq\|f\|_\infty.\].
\end{proof}

We now provide a new alternate proof of power series convergence for $f \in \mathbb{W}_+$. This proof works directly with the spectral definition, and does not make use of  Abel sums. It provides a little more insight into the boundedness requirement, and its connection to the dominated convergence theorem. It also highlights the need for the Abel sum definition since the spectral definition does not extend further to $H^\infty$ or even to the disk algebra $\mathbb{A}(\mathbb{D})$.

\begin{proposition} \label{existence_theorem_a}
(Existence) For $f \in \mathbb{W}_+$, $f(B)$ is a well defined bounded linear operators on the domain of definition of $B$, and the power series converges under the strong operator topology.
\end{proposition}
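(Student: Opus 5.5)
The plan is to work through the spectral theorem for the unitary operator $B$ rather than the Banach algebra estimate of Proposition \ref{existence_theorem_w}. Since $B$ is unitary, there is a projection-valued spectral measure $E$ supported on $S^1$ with $B = \int_{S^1} \lambda \, dE(\lambda)$. For any bounded Borel function $g$ on $S^1$ this gives $g(B) = \int g \, dE$, and for each $x \in \mathcal{H}$ we have $\|g(B)x\|^2 = \int |g(\lambda)|^2 \, d\mu_x(\lambda)$, where $\mu_x(\cdot) = \langle E(\cdot)x, x\rangle$ is a finite positive measure of total mass $\|x\|^2$. With this in hand, I define $f(B) := \int f \, dE$, using the boundary function $f(\lambda) = \sum_n a_n \lambda^n$ on $S^1$. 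This series converges absolutely and uniformly because $(a_n) \in \ell^1$, so $f$ is continuous and bounded by $\|f\|_{\mathbb{W}}$. Hence $f(B)$ is a well defined bounded operator with $\|f(B)\| \le \sup_{S^1}|f| \le \|f\|_{\mathbb{W}}$.

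Next I check that the partial sums $S_N(B) = \sum_{n=0}^N a_n B^n$ agree with the spectral integral of the trigonometric polynomial $S_N(\lambda) = \sum_{n \le N} a_n \lambda^n$. This holds because the spectral calculus is multiplicative and sends $\lambda^n$ to $B^n$. Consequently, for each $x$,
\[ \|f(B)x - S_N(B)x\|^2 = \int_{S^1} |f(\lambda) - S_N(\lambda)|^2 \, d\mu_x(\lambda). \]
The integrand tends to $0$ pointwise on $S^1$ and is dominated by the constant $(2\|f\|_{\mathbb{W}})^2$, since $|S_N(\lambda)| \le \sum |a_n|$. Because $\mu_x$ is finite, the dominated convergence theorem gives $S_N(B)x \to f(B)x$ for every $x$, which is convergence in the strong operator topology.

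The step that needs the most care is the domination. It is exactly here that the $\ell^1$ hypothesis is used: it gives a single bound on all partial sums uniformly in $N$ and $\lambda$. I would point out that this is the feature that fails for the disk algebra or $H^\infty$. In those classes the partial sums need not be uniformly bounded on $S^1$, and may even diverge at some points, so dominated convergence, and with it the spectral definition via partial sums, breaks down. This is why the Abel sum definition is needed there.

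Finally, I would remark that uniform convergence of $S_N \to f$ on $S^1$ actually gives operator norm convergence directly, since $\|f(B) - S_N(B)\| \le \sup|f - S_N|$, consistent with Proposition \ref{existence_theorem_w}. The strong convergence argument is nonetheless the one to present, since it isolates the role of boundedness.
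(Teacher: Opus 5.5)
Your proposal is correct and follows the paper's route. Both define $f(B)$ as the spectral integral $\int_{S^1} f\,dE$, identify the partial sums with spectral integrals of the trigonometric polynomials $S_N$, and use the uniform bound $|S_N(\lambda)| \le \|f\|_{\mathbb{W}}$ to apply dominated convergence. Your passage to the scalar measures $\mu_x(\cdot) = \langle E(\cdot)x, x\rangle$ just makes explicit the paper's appeal to dominated convergence for projection-valued measures, and your closing remark on norm convergence agrees with Proposition \ref{existence_theorem_w}.
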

\begin{proof}
We invoke the spectral theorem as follows (\cite[p.271, Section 4]{Conway1997}):
\begin{align*}
f_s(B) &= \int_{S^1} f(\lambda) dE_{\lambda} \\
     &= \int_{S^1} \sum_{n=0}^{\infty} a_n \lambda^n dE_{\lambda}.
\end{align*}

The above holds since $\sum_{n=0}^{\infty} a_n \lambda^n$ converges to $f(\lambda)$ everywhere on $S^1$. Further, for $f \in \mathbb{W}_+$, the partials sums $f_N(\lambda) = |\sum_{n=0}^{N} a_n \lambda^n|$ converge boundedly to $f(\lambda)$ on the unit circle. Thus, we can invoke the dominated convergence theorem (which holds for projection valued measures \cite[p.214]{Bhatia2009}) as follows.

\begin{gather*}
f_s(B) = \sum_{n=0}^{\infty} a_n \int_{S^1} \lambda^n dE_{\lambda} = \sum_{n=0}^{\infty} a_n B^n = f(B).
\end{gather*}
\end{proof}

We note here that the proof does not go through, at least as-is, for $f \in H^{\infty}$. In particular, the dominated convergence theorem requires that the sequence of partial sums $f_n$ converge boundedly to $f$. But this is not true in general for $f \in H^{\infty}$ or even for $f \in \mathbb{A}(\mathbb{D})$, though it is in fact true for $f \in \mathbb{W}_+$. To handle $f \in H^{\infty}$ more generally we need the alternate definition $f_a(B)$ as described earlier. Finally, we note that for $f \in \mathbb{W}_+$, $f_a(U) = f_s(U)$ \cite[p.114, Eq.2.14]{NagyFoias2010}.

The above discussion extended the definition of $f(B)$ and $f(T)$ to $H^\infty$ functions based on the use of Abel sums, but this required a weaker form of convergence. It is possible to achieve operator norm convergence based on Cesaro sums, but we will not pursue this further.

\section{Stationary Process Invertibility} \label{section_invertibility}

We begin our discussion of invertibility with \cite[p.50]{BoxJenkins1976} who look at a simple MA(1) process and discuss conditions under which it admits a well defined AR representation. In particular they ask that a certain power series associated with the AR representation, essentially a geometric series denoted as $\pi(B)$, converge on or within the unit circle. Then they go on to define similar conditions for more general MA processes based on analogy with the MA(1) example, a sort of heuristic extrapolation of ideas. Their conditions mix complex function theory with operator notation $\pi(B)$, but in effect, they are seeking conditions under which the inverse operator $\pi(B)$ exists, which in turn guarantees a well defined AR representation. The heuristic arguments are insightful up to a point, but the descriptions are not complete, and one is left with questions like how is $\pi(B)$ even defined in the first place for an infinite order MA process where $\pi(B)$ is not easy to construct, and the convergence criterion is difficult to apply.

Subsequent treatments by \cite{BrockwellDavis1991} and others turn these heuristic but open-ended ideas into a clean $\ell^1$ criterion that is easier to manage. This $\ell^1$ criterion is elegant and simple, and mirrors the essential idea of \cite{BoxJenkins1976} that the inverse be well defined (that the power series converge on and within the unit circle). On the other hand, it silently confounds necessary conditions for the existence of $\pi(B)$ with sufficient conditions. What we need is simply that the inverse operator $\pi(B)$ exist, in other words, that it is well defined.

Our discussion of invertibility tries to argue from this broader viewpoint of necessary conditions, and we show that one can define invertibility in the larger space of $H^\infty$ functions as well. In particular, we show that operator invertibility of $f(T)$ for $f \in H^\infty$ is entirely consistent with prior treatments on ensuring an AR representation, but drops the sufficient but stronger $\ell^1$ condition in favor of the weaker but {\em necessary} requirement of operator invertibility. The definition in \cite{Harvey1990} based on the roots of the transfer function polynomial is limited to finite order MA representations, but it does not confound the necessary with the sufficient, and so, without loss of generality, we use the Harvey definition in our proof of Proposition \ref{invertibility_equivalence_theorem}.

In preparation for Proposition \ref{invertibility_equivalence_theorem} connecting stationary process invertibility with operator invertibility, we first discuss an example by way of motivation, in essence the same MA(1) example discussed in \cite{BoxJenkins1976}. We then list or prove a few preparatory theorems ahead of the main one.

By way of example, consider the stationary process $X_t = \epsilon_t -2 \epsilon_{t-1}$ associated with the transfer function $f(z) = 1 - 2z$. The transfer function has a root $z = \frac{1}{2}$ inside the unit circle, and is considered non-invertible for the purposes of stationary process modeling \cite[p.67]{Hamilton1994}. This matches the idea that $f(T) = 1 - 2T$ is non-invertible in the corresponding Wiener algebra. If instead $f(z)$ is represented using the unitary operator as $f(B) = 1 - 2B$, we find that $f(B)$ is in fact invertible which would not be faithful to the notion of ARMA model invertibility. For instance:
\begin{gather*}
    \frac{1}{1 - 2B} = -\frac{B^{-1}}{2} \frac{1}{1 - \frac{1}{2}B^{-1}} = -\frac{B^{-1}}{2}(1 + \frac{B^{-1}}{2} + \frac{B^{-2}}{4} + \dots).
\end{gather*}
Thus $f(B)$ has a perfectly valid inverse, while $f(T)$ does not have a similar inverse since $T^{-1}$ does not exist. That is, the operator invertibility of $f(T)$ matches the stationary process invertibility $X_t$. thus unifying the two notions of invertibility. We formalize these observations in Proposition \ref{invertibility_equivalence_theorem} below. 

We now observe that for $f \in \mathbb{W}$, $f(T)$ is the same as $T_f$, the Toeplitz operator. This result allows us to link the invertibility of $f(T)$ with known invertibility results on the Toeplitz operator, and we make use of this connection in the proof of Proposition \ref{invertibility_equivalence_theorem}. 

\begin{proposition} \label{toeplitz_operator_theorem}
	Given $f \in H^{\infty}$, $f(T) = T_f$.
\end{proposition}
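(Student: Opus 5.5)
We identify $\mathcal{H}$ with $H^2$ via the unitary $U$ from the proof of Proposition \ref{hinfinity_isometry_theorem}, which sends the monomials $z^n$ to the basis vectors. Under this identification $T$ becomes $M_z = T_z$. The claim $f(T)=T_f$ then reduces to showing that the Abel-sum operator $f_a(M_z)$ equals the Toeplitz operator $T_f$ on $H^2$.

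\textbf{Step 1.} For $f\in H^\infty$ and $g\in H^2$, the product $fg$ already lies in $H^2$: its boundary values are in $L^2$, and it has a holomorphic extension $f(z)g(z)$ to $\mathbb{D}$. Hence $P(fg)=fg$, and $T_f=M_f$ is simply multiplication by $f$ on $H^2$.

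\textbf{Step 2.} Fix $0<r<1$ and put $f_r(z)=\sum_n r^n a_n z^n = f(rz)$. Its coefficients are absolutely summable, so $f_r\in\mathbb{W}_+$. By Proposition \ref{existence_theorem_w} the series $f_r(M_z)=\sum_n r^n a_n M_z^n$ converges in operator norm. Each partial sum acts on $g$ as multiplication by the polynomial $\sum_{n\le N} r^n a_n z^n$. These polynomials converge uniformly on $\overline{\mathbb{D}}$ to $f_r$, so the partial sums converge in norm to $M_{f_r}$. Thus $f_r(M_z)=M_{f_r}$.

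\textbf{Step 3.} We need $M_{f_r}g\to M_f g$ in $H^2$ as $r\to1^-$ for each $g\in H^2$. Since $f\in H^\infty$, the dilates $f_r$ converge to the boundary function $f$ a.e.\ on $S^1$ (Fatou). They are also uniformly bounded, $\|f_r\|_\infty\le\|f\|_\infty$, by the maximum principle. Therefore $|f_r g - fg|^2\le 4\|f\|_\infty^2|g|^2$ on $S^1$, and this dominating function is integrable. Dominated convergence then gives $\|f_rg-fg\|_{L^2(S^1)}\to0$, and the $H^2$ norm is the $L^2(S^1)$ norm of the boundary values. Combining this with Theorem \ref{nagy_foias_theorem}\ref{enum_abel_convergence} and the uniqueness of strong limits, $f_a(M_z)=M_f=T_f$. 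Conjugating back by $U$ gives $f(T)=f_a(T)=T_f$.

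\textbf{Main obstacle.} The delicate point is Step 3, the passage from $r<1$ to the boundary. It requires a.e.\ radial convergence of $f_r$ to $f$ on $S^1$ together with the uniform bound $\|f_r\|_\infty\le\|f\|_\infty$; both are standard $H^\infty$ facts.

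A fallback argument avoids the limit entirely by comparing matrix entries. Any bounded operator commuting with $M_z$ on $H^2$ is determined by its action on $1$, since polynomials are dense. Both $T_f$ and $f_a(M_z)$ commute with $M_z$: for $f_a(M_z)$ this holds because each $f_r(M_z)$ commutes with $M_z$ and commutation survives strong limits. Both operators send $1$ to $f$; for $f_a(M_z)$ this follows from $f_r\to f$ in $H^2$, which is just coefficientwise dominated convergence of $\sum|a_n|^2(1-r^n)^2$. Hence the two operators coincide.
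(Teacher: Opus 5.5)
Your proof is correct, but your main route differs from the paper's; your fallback is essentially what the paper does. The paper notes $T_f = M_f$ and then compares the two operators on the orthonormal basis, writing $f(T)z^n = (\sum_k a_k T^k)z^n = \sum_k a_k z^{n+k} = T_f z^n$. In doing so it passes the Abel limit defining $f(T)$ through without comment. Your observation that $f_r(M_z)1 = f_r \to f$ in $H^2$, because $\sum_n |a_n|^2(1-r^n)^2 \to 0$, supplies exactly the justification that step needs; combined with commutation with $M_z$, it amounts to checking each $z^n$.

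Your main route instead identifies $f_r(M_z) = M_{f_r}$ through the norm-convergent $\mathbb{W}_+$ series. It then proves $M_{f_r}g \to M_f g$ for every $g \in H^2$ using Fatou's theorem, the bound $\sup_{\mathbb{D}}|f| = \|f\|_\infty$, and dominated convergence on $S^1$.

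The basis comparison is more economical. It needs only $(a_n) \in \ell^2$ and the existence and boundedness of both operators, which the paper takes from Theorem \ref{nagy_foias_theorem}. Your Step 3 uses heavier boundary-value theory. In return, it proves the strong convergence $f_r(T) \to T_f$ outright, so it does not depend on Theorem \ref{nagy_foias_theorem}(a) for the existence of the Abel limit. It also exhibits $f_a(T)$ concretely as a multiplication operator on all of $H^2$, rather than only matching it on a basis.
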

\begin{proof}
	For $f \in H^\infty$ and $g \in H^2$ it is easy to see that product $f(z)g(z) \in H^2$. Thus $T_fg(z) = P(f(z)g(z)) = f(z)g(z)$, and thus $T_f$ is just the multiplication operator for $f \in H^\infty$. Now, letting $f(z) = a_0 + a_1z + a_2z^2 + \dots$ and $Tz^n = z^{n+1}$, let us consider the action of $f(T)$ and $T_f$ on the $z^n$:
	\[
	f(T) z^n = (\sum_{k=0}^{\infty} a_k T^k) z^n = \sum_{k=0}^{\infty} a_k z^{n+k}.
	\]
	Similarly,
	\[
	T_f z^n = (\sum_{k=0}^{\infty} a_k z^k) z^n = \sum_{k=0}^{\infty} a_k z^{n+k}.
	\]
	Since $T_fz^n = f(T)z^n$ for all basis elements $z^n \in H^2$, we have $f(T) = T_f$.
\end{proof}

We also make use of the following theorem \cite[p.139, Proposition 6.18]{Douglas1997} related to the invertibility of $H^\infty$ functions \footnote{The original statement uses a different convention for $H^\infty$, but what they refer to as $\hat{f}$ is essentially the same as the transfer function $f(z)$ in our convention.}.

\begin{theorem} \label{theorem_hinf_invertibility}
For $f \in H^\infty$, $f$ is invertible in $H^\infty$ if and only if $f|_\mathbb{D}$ is bounded away from zero.
\end{theorem}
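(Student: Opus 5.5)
We must prove Theorem \ref{theorem_hinf_invertibility}: for $f \in H^\infty$, $f$ is invertible in $H^\infty$ if and only if $f|_{\mathbb{D}}$ is bounded away from zero. We identify $f$ with its holomorphic extension to $\mathbb{D}$ (Poisson integral of the boundary function), and use that $\|f\|_\infty$ computed on $S^1$ equals $\sup_{\mathbb{D}}|f|$. This follows from the maximum principle applied to the Poisson representation, or equivalently from Proposition \ref{hinfinity_isometry_theorem} combined with the reproducing-kernel bound $|f(w)| \le \|M_f\|$ used in its proof.

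\emph{Necessity.} Suppose there is $g \in H^\infty$ with $fg = 1$ almost everywhere on $S^1$. The product $fg$ is the boundary function of the holomorphic function $f(z)g(z)$ on $\mathbb{D}$. Its Poisson integral is the constant $1$. Since bounded holomorphic functions are the Poisson integrals of their radial limits, we get $f(z)g(z)=1$ for every $z \in \mathbb{D}$. Hence $|f(z)| \ge 1/\|g\|_\infty$ on $\mathbb{D}$, so $f$ is bounded away from zero.

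\emph{Sufficiency.} Suppose $|f(z)| \ge \delta > 0$ for all $z \in \mathbb{D}$. Then $g := 1/f$ is holomorphic on $\mathbb{D}$ with $|g| \le 1/\delta$. By Fatou's theorem, a bounded holomorphic function has radial limits almost everywhere. Its boundary function lies in $L^\infty(S^1)$, so $g \in H^\infty$ in the paper's sense. The radial limit of $fg \equiv 1$ is the product of the radial limits wherever both exist. Therefore $fg = 1$ almost everywhere on $S^1$, and $f$ is invertible in $H^\infty$.

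The main obstacle is the bookkeeping between the two descriptions of $H^\infty$: boundary functions on $S^1$ versus holomorphic functions on $\mathbb{D}$. In particular we must justify that the holomorphic function is recovered from its boundary values, which is the Poisson/Cauchy representation of bounded analytic functions. Once that identification is granted, both directions are short. The only other point needing care is that pointwise identities transfer between the disk and the circle, and we would handle this by citing Fatou's theorem.
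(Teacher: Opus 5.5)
Your proof is correct. The paper does not prove this statement itself; it cites Douglas (Proposition 6.18). Your argument is the standard one behind that citation: the Poisson extension identifies $H^\infty$ isometrically and multiplicatively with the bounded holomorphic functions on $\mathbb{D}$, and Fatou's theorem makes the identification onto. After that, both directions reduce to the fact that a bounded holomorphic function has a bounded holomorphic reciprocal if and only if it is bounded away from zero.

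The one thing to sharpen is your opening identification, which is more than bookkeeping. The theorem holds for $H^\infty$ in the standard sense you adopt, which is also Douglas's convention: boundary values of \emph{bounded} holomorphic functions, recovered as Poisson integrals of their $L^\infty$ boundary data. It fails for the paper's literal definition, which only asks for a holomorphic $g$ on $\mathbb{D}$ whose radial limits exist and lie in $L^\infty(S^1)$.

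A counterexample is the singular inner function $S(z)=\exp\bigl(-\frac{1+z}{1-z}\bigr)$. Both $S$ and $1/S$ have unimodular radial limits a.e., so $S$ would count as invertible in that class. Yet $S(r)\to 0$ as $r\to 1^-$, so $S|_{\mathbb{D}}$ is not bounded away from zero. Correspondingly, the boundary function of $1/S$ is $\overline{S}$, whose Poisson integral is $\overline{S(z)}$ rather than $1/S(z)$. This is exactly why your step ``bounded holomorphic functions are the Poisson integrals of their radial limits'' needs boundedness on $\mathbb{D}$.

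State that definition explicitly and the proof is complete. The same example shows the condition must be read on $\mathbb{D}$. It cannot be replaced by ``$0$ is not in the essential range of $f$'', as the paper does when it applies this theorem: $S$ has unimodular boundary values but is not invertible in $H^\infty$.
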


We now have the tools necessary to state and prove our main result connecting stationary process invertibility with operator invertibility.

\begin{proposition} \label{invertibility_equivalence_theorem}
Given $f \in H^\infty$, and a stationary process represented as $X_t = f(T) \epsilon_t$, operator invertibility of $f(T)$ is consistent with, and subsumes the Harvey definition of invertibility for a stationary process $X_t$.
\end{proposition}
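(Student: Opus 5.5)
The plan is to reduce the statement to two claims. First, for a polynomial transfer function $f(z)=\sum_{n=0}^{N} a_n z^n$ (the only case where the Harvey definition applies), $f(T)$ is invertible in $B(\mathcal{H})$ if and only if all roots of $f$ lie outside the closed unit disk; this is the consistency claim. Second, operator invertibility of $f(T)$ still makes sense for every $f \in H^\infty$; this is the subsumption claim. The second claim is immediate from Proposition \ref{unitary_existence_theorems}, since $f(T)$ is a well defined bounded operator for all $f\in H^\infty$. So the substance lies in the first claim. We shall in fact prove the slightly stronger statement: for $f \in H^\infty$, $f(T)$ is invertible if and only if $f|_{\mathbb{D}}$ is bounded away from zero.

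\emph{Sufficiency.} Via Proposition \ref{toeplitz_operator_theorem}, identify $f(T)$ with the multiplication operator $T_f = M_{f(z)}$ on $H^2$. If $|f|\ge \delta>0$ on $\mathbb{D}$, then Theorem \ref{theorem_hinf_invertibility} gives $g=1/f \in H^\infty$. Hence $T_g$ is bounded, and $T_fT_g = T_{fg} = I = T_gT_f$, because products of analytic symbols multiply. Thus $f(T)^{-1} = g(T)$, and by Proposition \ref{hinfinity_isometry_theorem} its norm is $\|1/f\|_\infty$.

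\emph{Necessity.} Suppose $f(w_0)=0$ for some $w_0\in\mathbb{D}$. Then every $h = fg$ in the range of $T_f$ vanishes at $w_0$, so $\langle T_f g, K_{w_0}\rangle = 0$ for all $g$. The Szeg\"o kernel $K_{w_0}$ is therefore a nonzero vector orthogonal to the range, $T_f$ is not surjective, and $f(T)$ is not invertible. If $f$ has no zero in $\mathbb{D}$ but $\inf_{\mathbb{D}}|f|=0$, pick $w_k$ with $f(w_k)\to 0$. Using $T_f^* k_{w} = \overline{f(w)}\, k_{w}$ for the normalized kernels $k_w = K_w/\|K_w\|$ (the eigenvector fact quoted before Proposition \ref{hinfinity_isometry_theorem}), we get $\|T_f^* k_{w_k}\| = |f(w_k)| \to 0$. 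So $T_f^*$ is not bounded below, and $f(T)$ is not invertible. Consequently $f(T)$ is invertible exactly when $f|_{\mathbb{D}}$ is bounded away from zero.

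\emph{Specialization to polynomials.} A polynomial is bounded away from zero on $\mathbb{D}$ if and only if it has no zeros in the closed disk $\overline{\mathbb{D}}$, by continuity and compactness. So for polynomials the criterion is exactly the condition that all roots lie outside $S^1$, which is Harvey invertibility. This recovers the motivating example: $1-2T$ is not invertible because of the root $1/2$, whereas $1-2B$ is invertible.

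\emph{Main obstacle.} The step I expect to need the most care is necessity when $f$ has zeros on or approaching the boundary. For polynomials, a root on $S^1$ gives no interior zero, so the approximate-eigenvector argument with $w_k \to$ the root is essential. It must be checked that $\|T_f^* k_w\| = |f(w)|$ holds exactly, which follows from $T_f^*K_w = \overline{f(w)}K_w$ via the reproducing property. A secondary subtlety is that the paper's notion of $f(T)$ for general $H^\infty$ symbols is the Abel sum. I will rely on Proposition \ref{toeplitz_operator_theorem} to identify it with $T_f$, so that the multiplicativity $T_fT_g=T_{fg}$ is available.
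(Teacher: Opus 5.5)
Your proof is correct, but it takes a different route from the paper. The paper works with polynomials from the start. For the direction Harvey $\Rightarrow$ operator, it factors $f(z)=\prod_i (z-\alpha_i)$ and inverts each factor $T-\alpha_i$ by a norm-convergent geometric series in $\alpha_i^{-1}T$. For the converse, it identifies $f(T)=T_f$ and then cites Wintner's theorem ($T_f$ invertible iff $f$ invertible in $H^\infty$) together with Theorem~\ref{theorem_hinf_invertibility}.

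You instead prove the full characterization, $f(T)$ invertible iff $\inf_{\mathbb{D}}|f|>0$, for every $f\in H^\infty$. Sufficiency comes in one step from $1/f\in H^\infty$ and $T_fT_{1/f}=T_{1/f}T_f=I$. Necessity comes directly from $T_f^*K_w=\overline{f(w)}K_w$. This is a self-contained proof of the nontrivial half of Wintner's theorem, built from the same Szeg\"o-kernel identity used in Proposition~\ref{hinfinity_isometry_theorem}.

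The paper's factorization buys explicitness. The inverse is a finite product of absolutely convergent geometric series, so the AR coefficients are visibly in $\ell^1$. This matches the time-series habit of inverting MA factors one at a time. However, it is tied to polynomials, and its converse direction rests entirely on citations.

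Your version buys generality and self-containment. It gives real content to the ``subsumes'' claim for non-polynomial $f\in H^\infty$. Your approximate-eigenvector step also handles roots on $S^1$ explicitly, e.g.\ $I-T$, whose symbol has no zero in $\mathbb{D}$ yet is not invertible. That is exactly the case where ``no zeros in $\mathbb{D}$'' is not enough.

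The cost is that your inverse $g(T)$ is in general only an Abel/strong limit rather than a norm-convergent series. For polynomials you can recover the paper's $\ell^1$ conclusion: $1/f$ is then holomorphic on a disk of radius $>1$, so $1/f\in\mathbb{W}_+$. You also depend on Proposition~\ref{toeplitz_operator_theorem} to get multiplicativity for Abel-summed operators, a dependence you correctly flag.
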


Note that the Harvey definition of invertibility for $X_t$ is applicable only when the MA representation is of finite length. In other words, when the corresponding transfer function is in fact a polynomial. This definition has been generalized to the Wiener algebra $\mathbb{W}_+$ as seen in the treatments of \cite{Hamilton1994} and \cite{Shumway2006}. We claim that the operator invertibility of $f(T)$ generalizes this further to $f \in H^\infty$.

\begin{proof}

The overall proof structure involves proving the following two subclaims. Without loss of generality, we will prove these two subclaims for the specific case when $f(T)$ is a polynomial for which Harvey invertibility is defined.
\begin{enumerate}[label=(\alph*),nosep]
	\item $X_t$ is invertible implies $f(T)$ is invertible.
	\item $f(T)$ is invertible implies $X_t$ is invertible.
\end{enumerate}

We will start with (a). Since $X_t$ is invertible, we have a polynomial $f(z) = \prod_{i=1}^{N} (x - \alpha_i)$ where $\alpha_i$ are outside the unit circle $S^1$. This implies $f(T) = \prod_{i=1}^{N} (T - \alpha_i) = \prod_{i=1}^{N} -\alpha_i(1 - \alpha_i^{-1}T)$. Now each of the $T - \alpha_i$ can be inverted using a geometric series as:
\[
	g_i(T) = -\alpha_i (1 - \alpha_i^{-1}T)^{-1} = -\alpha_i(1 + \alpha_i^{-1}T + \alpha_i^{-2}T^2 + \cdots).
\]

Further, each of these geometric series represents an operator function $g_i(T)$ with $g_i \in H^{\infty}$. Thus the product of $g_i$ continues to be in $H^{\infty}$, and converges under the strong operator topology. Thus $g(T) = \prod_{i = 1}^{N} g_i(T)$ is well defined, and thus $f(T)$ is invertible with inverse $g(T)$. This completes the proof of (a).

We will now prove (b). Given $f(T)$ invertible, and the equivalence of $f(T)$ and $T_f$ as in Proposition \ref{toeplitz_operator_theorem}, the Toeplitz operator $T_f$ is invertible. For $f \in H^\infty$, $T_f$ is invertible if and only if $f$ is invertible in $H^\infty$ (Winter's Theorem, \cite[p.164]{Douglas1997}). But this only happen when zero is not in the essential range of $f$ (Theorem \ref{theorem_hinf_invertibility}). For polynomials $f$, this means the roots of $f(z)$ have to be outside the unit circle. This completes the proof.

\end{proof}

The condition $f \in \mathbb{W}_+$ is an $\ell^1$ condition on the moving average coefficients of $X_t$. When $f(T)$ is invertible, it follows from the definition of the Wiener algebra, or from Wiener's $\frac{1}{f}$ invertibility theorem, that this $\ell^1$ condition applies to the AR coefficients as well. Such an $\ell^1$ condition on the AR coefficients is introduced in \cite[p.84, Proposition 3.1.2]{BrockwellDavis1991} in a slightly different manner based on the following theorem:
\begin{theorem}[$\ell^1$ Invertibility Condition] \label{brockwell_invertibility_theorem}
\cite[p.84, Proposition 3.1.2]{BrockwellDavis1991} If $\{X_t\}$ is a stationary process and we have a sequence $(a_n)$ such $\sum_{j=-\infty}^{\infty} |a_n| < \infty$, then the following series converges absolutely with probability one and in mean square to the same limit:
\[
    f(B) X_t = \sum_{n=-\infty}^{\infty} a_n X_{t-n}.
\]
\end{theorem}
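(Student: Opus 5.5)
The plan is to prove the statement for a single fixed stationary process $X_t$ and absolutely summable $(a_n)$. I would derive almost sure absolute convergence from a first-moment bound and monotone convergence, and mean square convergence from a Cauchy criterion in $L^2(\Omega)$. The two limits are then identified through a subsequence argument.

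First, stationarity gives $E|X_t|^2 = \gamma(0) + \mu^2 =: C$, a constant independent of $t$. By Cauchy--Schwarz, $E|X_t| \le C^{1/2}$. Consider the nonnegative random variable $S = \sum_n |a_n||X_{t-n}|$. The monotone convergence theorem gives
\[
E S = \sum_n |a_n| E|X_{t-n}| \le C^{1/2} \sum_n |a_n| < \infty .
\]
Hence $S < \infty$ almost surely, so $\sum_n a_n X_{t-n}$ converges absolutely with probability one. Its almost sure limit $Y$ is then well defined. The same bound applied to $E S^2$, using $E|X_iX_j| \le C$, gives
\[
E S^2 \le C\bigl(\sum_n |a_n|\bigr)^2 ,
\]
so $Y \in L^2(\Omega)$.

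Second, for mean square convergence, let $S_N = \sum_{|n|\le N} a_n X_{t-n}$. For $M>N$, the triangle inequality in $L^2(\Omega)$ gives
\[
\|S_M - S_N\|_2 \le \sum_{N<|n|\le M} |a_n|\,\|X_{t-n}\|_2 \le C^{1/2}\sum_{|n|>N}|a_n| \to 0 .
\]
This is exactly the Banach algebra estimate of Proposition \ref{existence_theorem_w}, with $\|X_{t-n}\|_2$ playing the role of $\|B^n\|=1$. So $S_N$ is Cauchy, and by completeness of $L^2(\Omega)$ it converges to some $Z$.

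Third, I identify $Z=Y$. Convergence in $L^2$ implies that a subsequence $S_{N_k}$ converges to $Z$ almost surely. But $S_N \to Y$ almost surely along the full sequence. Hence $Z = Y$ almost surely.

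I expect this identification step to be the only point needing care, because the two modes of convergence are a priori unrelated. One could instead use dominated convergence: $|S_N - Y| \le 2S$ and $S \in L^2$, so $E|S_N-Y|^2 \to 0$ directly. This route makes the Cauchy argument and the subsequence step optional.

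Finally, writing $X_{t-n} = B^n X_t$ on the closed span of the process, the limit is exactly the norm-convergent $f(B)X_t$ from Proposition \ref{existence_theorem_w}. This justifies the notation in the statement.
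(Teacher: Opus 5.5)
Your proof is correct. The paper gives no argument of its own here; it simply quotes Proposition 3.1.2 of Brockwell--Davis. Your route is essentially theirs: monotone convergence with $E|X_t| \le (E|X_t|^2)^{1/2}$ gives almost sure absolute convergence, and the $L^2$ Cauchy criterion gives mean square convergence. The only difference is the identification of the two limits. They use Fatou's lemma, $E|Z-Y|^2 \le \liminf_N E|Z-S_N|^2 = 0$, whereas you use an almost surely convergent subsequence or domination by $2S$; these are interchangeable.

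Two small points of precision:
\begin{enumerate}
\item What you actually use is $\sup_t E|X_t|^2<\infty$, which requires a constant mean. The paper's literal definition of stationarity constrains only covariances, and under it $X_t = t+\epsilon_t$ with $a_n = n^{-2}$ for $n\ge 1$ would contradict the statement. The paper's standing zero-mean Wold convention supplies what you need.
\item Your closing identification with $f(B)X_t$ needs the two-sided analogue of Proposition \ref{existence_theorem_w}, for $f\in\mathbb{W}$ rather than $\mathbb{W}_+$, since that proposition is stated only for $\mathbb{W}_+$. The analogue uses $\|B^{-n}\|=1$ for the unitary lag on the closed span of the process. That lag is well defined because $E X_s\overline{X_r}$ depends only on $s-r$, which again uses the constant mean.
\end{enumerate}
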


Proposition \ref{invertibility_equivalence_theorem} shows that the $\ell^1$ condition can be relaxed, by requiring mean square convergence instead of absolute convergence in probability. \cite{Buhlmann1995} and \cite{Berk1974} connect this $\ell^1$ condition to Wiener's $\frac{1}{f}$ invertibility condition. But their approach is Fourier analytic, and unlike the current paper, there is no attempt at identifying the operator algebra structure beneath the $\ell^1$ condition, in particular the Wiener algebra structure of the transfer functions $f(z)$ or $f(B)$. 

\subsection{Historical Commentary}

Having shown the use of the unilateral shift operator $T$ in analyzing invertibility, the current section tries to restore the balance by reviewing why the bilateral shift operator $B$ was introduced in the first place. 

In the study of stationary processes, it appears that the bilateral shift operator $B$ was introduced by \cite{Kolmogorov1941}, and defined as follows assuming a doubly infinite stochastic process $\{X_n\}_{n=-\infty}^{\infty}$.
\begin{gather} \label{eq_unitary_integral}
	B = \int_{z \in S^1} z dE_z
\end{gather}

He does not offer any particular justification for the use of a doubly infinite process as opposed to a singly infinite process, or the bilateral shift operator $B$ as opposed to the unilateral shift operator $T$. \cite[pp.462-463]{Doob1953} goes a little further arguing that for the purposes of statistical parameters like mean and covariance, any given stationary process can be replaced by a doubly infinite Gaussian process with a unitary shift operator, and thus it suffices to study the bilateral shift operator and dispense with the unilateral shift operator. Doob's construction can be regarded as a form of dilation that transforms the underlying random variables, but preserves the statistical properties. In contrast Nagy's dilation keeps the random variables unchanged, and is faithful to the underlying algebra. Doob's argument is valid within careful confines, as long as one restricts oneself to the computation of statistical parameters. But it does not suffice if one extends the analysis to algebraic properties like invertibility. The current article shows that it is more natural to use the unilateral shift operator $T$ in such situations.

\section{Mean Ergodicity} \label{section_mean_ergodicity}

Besides invertibility, the $\ell^1$ condition is often linked to the mean ergodicity of a stationary process (\cite[p.219]{BrockwellDavis1991}, \cite[p.47,52,70]{Hamilton1994}), wherein absolute summability of the moving average coefficients is a sufficient condition for the sample mean to converge to the unbiased true mean. The current section makes it clear that mean ergodicity continues to hold for $f \in H^\infty$ as well.

We first review the mean ergodic theorem, and a working definition of mean ergodicity.
\begin{theorem}[Mean Ergodic Theorem]
If $U$ is an isometry on a Hilbert space, and if $P$ is the projection on the space of all vectors invariant under $U$, then $\frac{1}{n} \sum_{j=0}^{n-1} U^jg$ converges to $Pg$ for every $g$ in the space.
\end{theorem}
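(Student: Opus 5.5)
The Mean Ergodic Theorem is von Neumann's, and I would prove it by a Hilbert space decomposition. Let $\mathcal{H}$ be the space, $U$ the isometry, and $A_n := \frac{1}{n}\sum_{j=0}^{n-1} U^j$. Since $\|U^j\| \le 1$, each $A_n$ is a contraction, so it suffices to prove convergence on a dense set (or on the summands of an orthogonal decomposition) and then pass to the closure by a standard $\varepsilon/3$ argument. The target decomposition is
\[
\mathcal{H} = \ker(I-U) \oplus \overline{\operatorname{ran}(I-U)},
\]
where $\ker(I-U)$ is exactly the space of $U$-invariant vectors, the range of $P$.

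First I handle the two pieces separately. If $Ug = g$, then $A_n g = g = Pg$ for every $n$. If $g = h - Uh$, the sum telescopes to $A_n g = \frac{1}{n}(h - U^n h)$, whose norm is at most $2\|h\|/n \to 0$. Since $\sup_n\|A_n\|\le 1$, the convergence $A_n g \to 0$ extends to all $g$ in the closure $\overline{\operatorname{ran}(I-U)}$.

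The step I expect to be the main obstacle is showing that these two subspaces really are orthogonal complements when $U$ is only an isometry, not a unitary. The general fact is $\overline{\operatorname{ran}(I-U)}^{\perp} = \ker(I-U^*)$, so I must show $\ker(I-U^*) = \ker(I-U)$.

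For the inclusion $\ker(I-U) \subseteq \ker(I-U^*)$, take $Ug = g$. Then $U^*g = U^*Ug = g$, using $U^*U = I$.

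For the reverse inclusion, take $U^*g = g$. Then
\[
\|Ug - g\|^2 = \|Ug\|^2 - 2\,\mathrm{Re}\langle Ug, g\rangle + \|g\|^2 = 2\|g\|^2 - 2\,\mathrm{Re}\langle g, U^*g\rangle = 0,
\]
so $Ug = g$.

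Combining the pieces, write any $g \in \mathcal{H}$ as $g = Pg + (g - Pg)$, with $g - Pg \in \overline{\operatorname{ran}(I-U)}$. Then $A_n g = Pg + A_n(g - Pg) \to Pg$, which proves the theorem. Note that the argument in fact works for any contraction $U$: the reverse inclusion becomes $\|Ug-g\|^2 \le 2\|g\|^2 - 2\,\mathrm{Re}\langle g,U^*g\rangle = 0$, and for the forward inclusion, $Ug=g$ gives $\langle U^*g, g\rangle = \|g\|^2$ with $\|U^*g\|\le\|g\|$, so equality in Cauchy–Schwarz forces $U^*g = g$.
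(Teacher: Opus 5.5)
Your proof is correct and complete. The pieces all hold up: the splitting $\mathcal{H} = \ker(I-U) \oplus \overline{\operatorname{ran}(I-U)}$, the identification $\ker(I-U^*) = \ker(I-U)$ via $U^*U = I$ and the norm computation, the telescoping estimate on vectors $h - Uh$, and the density extension using $\sup_n \|A_n\| \le 1$. Your closing remark that the same argument works for any contraction is also correct.

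The paper gives no proof of this statement. It only recalls the classical theorem before applying it to the bilateral shift $B$, so there is nothing to compare against. Your argument is the standard elementary one, and it needs neither the spectral theorem nor unitarity of $U$.
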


If we replace $g$ with $X_t$, we can see that the above sum represents a time average. $Pg$ is often called the space average, and equals the expectation $E[X_t]$. It is clear that the above theorem makes no assumptions about the transfer function $f$.

A stationary process is said to be mean ergodic when the time average of samples equals the space or ensemble average. Mean ergodicity is a property of a stochastic process $X_t$, and it applies to stationary processes with continuous spectral measure where there is no linearly deterministic component. In other words, whenever the time series is described by a unitary operator as $X_t = UX_{t-1}$. 

Mean ergodicity for a transfer function $f \in H^{\infty}$ refers to the mean ergodicity of the associated stationary process $X_t = f(B) \epsilon_t$. The mean ergodicity of $X_t$ follows from the above stated mean ergodic theorem based on the unitary operator $B$, the bilateral shift operator. The statement or proof of the theorem make no reference to any $\ell^1$ condition on the coefficients of $f$, and so no further proof is required that $X_t = f(B) \epsilon_t$ is mean ergodic for $f \in H^{\infty}$. Given the prior (somewhat mistaken) connection to the $\ell^1$ condition, we note some flaws in the literature connecting these ideas, in particular the arguments in \cite[p.47]{Hamilton1994} which we review at this point. Hamilton's example is as follows:
\begin{gather*} \label{hamilton_process_eqn}
X_t^{(i)} = \mu^{(i)} + \epsilon_t
\end{gather*}

With $i$ fixed, $X_t^{(i)}$ is a stationary process. $\mu^{(i)}$ is drawn a normal distribution $N(0, \lambda^2)$ and $\epsilon_t$ is a Gaussian white noise process with mean zero and variance $\sigma^2$. Hamilton claims to prove that mean ergodicity does not hold for $X_t^{(i)}$ since $\mu^{(i)}$ may not be zero. While it is true that time average does not equal ensemble average for $X_t^{(i)}$ the subsequent reasoning is incomplete at best, and perhaps misleading. The flaw is best understood based on the following example by Brockwell and Davis.

\cite[p.190]{BrockwellDavis1991} looks at a stationary process $X_t = Z + \epsilon_t$ where $X_t$ is a stochastic process, $Z$ is a random variable fixed across time and $\epsilon_t$ are white noise processes. In such a setup, $Z$ refers to the deterministic part of the Wold decomposition (since it can be readily predicted), and leads to a discontinuity in the spectral measure. The random variable $Z$ corresponds to $\mu^{(i)}$ in Hamilton's example. The mean and variance of $Z$ cannot be determined based on a time average of $X_t$ since $Z$ is unchanging across multiple samples of $X_t^{(i)}$ for fixed $i$. This is the essential problem. In particular, Hamilton's example fails to be non-ergodic because of the deterministic component of the process $\mu^{(i)}$, not because of a failed $\ell^1$ condition which applies to the non-deterministic part.

Hamilton's commentary tries to link the $\ell^1$ condition with the mean ergodicity property. But the correct reason is that the time average of a stationary process $X_t$ equals the value of the spectral measure at zero, or equivalently, the mean value of the process. In Hamilton's example, there is a discontinuity in the spectral measure at zero, and it represents the deterministic part of the process $Z = \mu^{(i)}$.

To clearly show the spectral discontinuity $\Delta \mu(0)$ referred to above, and its relation to time averages, we include the following result \cite[p.42]{Grenander1957}:
\begin{theorem} [Grenander-Rosenblatt]
	\begin{gather*}
		\frac{1}{N} \sum (X_t^{(i)} - E[X_t]) = \Delta \mu(0) + \frac{1}{N} \int e^{i \lambda} \frac{1 - e^{iN\lambda}}{1 - e^{i\lambda}} d\mu_0(\lambda) \\
		\mu_0(\lambda) = \mu(\lambda),~~\lambda < 0 \\
		\mu_0(\lambda) = \mu(\lambda) - \Delta \mu(0),~~ \lambda \ge 0.
	\end{gather*}
\end{theorem}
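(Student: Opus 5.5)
We would prove the Grenander--Rosenblatt identity through the spectral representation of the stationary process, not through the shift operator calculus. By the spectral theorem applied to the unitary shift $B$ (as in \eqref{eq_unitary_integral}), the centered process has the representation $X_t - E[X_t] = \int_{S^1} e^{it\lambda}\, dZ(\lambda)$. Here $Z$ is an orthogonal-increment process and $E|dZ(\lambda)|^2 = d\mu(\lambda)$, with $\mu$ the spectral measure; $Z$ can be taken to be $E_\lambda(X_0 - E[X_0])$. Concretely, $dZ(\lambda) = dE_\lambda (X_0 - E[X_0])$ and $X_t = B^t X_0$.

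First, split $\mu$ into its atom at zero and the remainder, $\mu = \Delta\mu(0)\,\delta_0 + \mu_0$. The distribution function $\mu_0$ is then exactly as in the statement: $\mu_0(\lambda) = \mu(\lambda)$ for $\lambda<0$ and $\mu_0(\lambda) = \mu(\lambda) - \Delta\mu(0)$ for $\lambda \ge 0$. Correspondingly, split $Z$ into the jump $\Delta Z(0) = E_{\{0\}}(X_0 - E[X_0])$, which is the projection onto the $B$-invariant vectors, and the random measure $Z_0$ with control measure $\mu_0$, which has no atom at zero.

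Next, average over $t=1,\dots,N$ and move the finite sum inside the stochastic integral. For $\lambda \neq 0$ the geometric sum gives $\sum_{t=1}^N e^{it\lambda} = e^{i\lambda}\frac{1-e^{iN\lambda}}{1-e^{i\lambda}}$. On $\{0\}$ the sum equals $N$, so after dividing by $N$ that part contributes exactly the jump term. This yields the displayed formula. I read it with $\Delta\mu(0)$ shorthand for the jump $\Delta Z(0)$ and $d\mu_0$ for $dZ_0$, at the level of random variables; taking second moments gives the version written purely in terms of measures, $\Delta\mu(0) + \frac{1}{N^2}\int \bigl|\frac{1-e^{iN\lambda}}{1-e^{i\lambda}}\bigr|^2 d\mu_0(\lambda)$.

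The main obstacle is this interpretive point: making the identity precise as an equality in $L^2(\Omega)$ with the random spectral measure, and then justifying its consequence that the remainder term vanishes as $N\to\infty$. For that limit, the integrand $\frac{1}{N}e^{i\lambda}\frac{1-e^{iN\lambda}}{1-e^{i\lambda}}$ is bounded by $1$ in modulus and tends to $0$ for every $\lambda\neq 0$. Since $\mu_0(\{0\})=0$, dominated convergence gives mean-square convergence of the remainder to zero. This also recovers the mean ergodic theorem stated earlier, with $P = E_{\{0\}}$.
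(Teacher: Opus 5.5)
The paper gives no proof of this statement (it is quoted from Grenander and Rosenblatt). Your argument is exactly the computation the displayed formula encodes, and it is correct, including the dominated-convergence step for the remainder: you build the spectral representation $X_t-E[X_t]=\int e^{it\lambda}\,dZ(\lambda)$ from Kolmogorov's shift on the closed span of the process, split off the atom $\Delta Z(0)=E_{\{0\}}(X_0-E[X_0])$, and sum $\sum_{t=1}^{N}e^{it\lambda}$.

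Reading $\Delta\mu(0)$ and $d\mu_0$ as the random jump $\Delta Z(0)$ and the random measure $dZ_0$ is right, since the left side is a random variable. Pairing your identity with $X_0-E[X_0]$ gives the form written literally in the spectral measure, $\frac{1}{N}\sum_{t=1}^{N}\gamma(t)=\Delta\mu(0)+\frac{1}{N}\int e^{i\lambda}\frac{1-e^{iN\lambda}}{1-e^{i\lambda}}\,d\mu_0(\lambda)$, with $\gamma$ the autocovariance. Your second-moment formula is instead the variance of the sample mean.
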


We recommend \cite[p.42]{Grenander1957} for a rigorous treatment of mean ergodicity and other topics in stationary time series analysis. The treatments by Hannan and Rozanov seem to be based on this book as well.

\section{Conclusion}

We identified a gap in time series literature in the use of operator valued transfer functions $f(B)$, and addressed the gap with rigorous proofs of existence for the power series expressions denoted by $f(B)$ and $f(T)$, for $f \in H^{\infty}$. 

We applied this rigorous operator calculus to unify the notion of stationary process invertibility based on the roots of the transfer function $f(z)$ with the notion of operator invertibility of $f(T)$. The article, in effect, rationalizes the vocabulary of time series analysis with known results in rigorous functional analysis. We also showed that $f(T)$ represents an isometry, and that $f(T) = T_f$, the Toeplitz operator. 

In terms of future work, we would like to examine the unbounded operator $f(B)$ with $f \in H^2$.

\begin{acknowledgement}
We would like to thank Prof. Nithin Nagaraj for ongoing support, and asking about invertibility; Prof. E.K. Narayanan, Dr. Ajay Shenoy and Dr. Srikanth Pai for helpful and interesting discussions.
\end{acknowledgement}

\bibliography{main}

\end{document}